\documentclass[11pt]{amsart}

 \usepackage{amsfonts,graphics,amsmath,amsthm,amsfonts,amscd,amssymb,amsmath,latexsym,multicol,
 mathrsfs}
\usepackage{epsfig,url}
\usepackage{flafter}
\usepackage{fancyhdr}
\usepackage{hyperref}
\hypersetup{colorlinks=true, linkcolor=black}
%%%%%%%%%%%

%%%%%%%%%%%%
%\addtolength{\oddsidemargin}{-0.5in}
%\addtolength{\evensidemargin}{-0.5in}
%\addtolength{\textwidth}{1in}

%\addtolength{\topmargin}{-0.4in}
%\addtolength{\textheight}{0.7in}

%%%%%%%%%%%%

\makeatletter

\def\jobis#1{FF\fi
  \def\predicate{#1}%
  \edef\predicate{\expandafter\strip@prefix\meaning\predicate}%
  \edef\job{\jobname}%
  \ifx\job\predicate
}

\makeatother

\if\jobis{proposal}%
\else
\fi

 \usepackage[matrix, arrow]{xy}

\DeclareMathOperator{\vol}{vol}

 %\newcounter{thm}[theorem]
 % all theorems are numbered by the subsection counter

 \numberwithin{equation}{subsection}
 \numberwithin{footnote}{subsection}

 \newtheorem{lem}[subsection]{Lemma}
 \newtheorem{prop}[subsection]{Proposition}
 \newtheorem{thm}[subsection]{Theorem}

{%_%_%_%_% upright style; roman (non-italic) text
    \newtheoremstyle{upright}%
        {8pt plus2pt minus4pt}%
        {8pt plus2pt minus4pt}%
        {\upshape}%
        {}%
        {\bfseries\scshape}%
        {}%
        {1em}%
        {}%
\theoremstyle{upright}

}

 \newcommand{\PP}{\mathbb P}
 
 \newcommand{\Q}{\mathbb Q}
 \newcommand{\R}{\mathbb R}

 \newcommand{\rddown}[1]{\left\lfloor{#1}\right\rfloor} % round-down

%%%%%%%%%%%%%%%%%%%%%%%%%%%%%%%%%%%%%%%%%%%%%%%%%%%%%

\title{\large A\MakeLowercase{nticanonical volumes of} F\MakeLowercase{ano 4-folds}}
\thanks{
2010 MSC:
14J45, % Fano varieties
14E30, %Minimal model program (Mori theory, extremal rays)
%14C20, % Linear systems, divisors, etc
%14E05. %Rational and birational maps
}
\author{\large C\MakeLowercase{aucher} B\MakeLowercase{irkar}}
\date{\today}
\begin{document}
\maketitle
\begin{abstract}
We find an explicit upper bound for the anticanonical volumes of Fano 4-folds with canonical singularities.
\end{abstract}

\section{Introduction}

We work over an algebraically closed field of characteristic zero.

Fano varieties constitute a fundamental class of algebraic varieties in algebraic geometry and many other fields. Due to their special features, it is more likely to have a detailed classification of Fano varieties compared to other classes such as Calabi-Yau varieties or varieties of general type. 

An important step in classifying Fano varieties is to obtain an explicit upper bound for their anticanonical volume under mild conditions on the singularities. Bounding the anticanonical volume of smooth Fano 3-folds goes back to Fano himself and it is a crucial step in showing that smooth Fano 3-folds form a bounded family. Similarly boundedness of anticanonical volume of smooth Fano varieties of fixed dimension is used to show that such Fano varieties are bounded: see Nadel [\ref{Nadel}] for the Picard number one case and Koll\'ar-Miyaoka-Mori [\ref{KMM}] for the general case which also uses Mori's bend and break technique. 

Not surprisingly everything gets more complicated when we allow singularities. The surface case is well-understood. An explicit upper bound for anticanonical volume of Fano 3-folds with canonical singularities is a quite recent result of Jiang-Zou [\ref{JZ}]: the upper bound is 324. In the $\Q$-factorial Picard number one case, an explicit upper bound was earlier found by Lai [\ref{Lai}] for $\epsilon$-log canonical ($\epsilon$-lc for short) Fano 3-folds. For more partial results in dimension 3, see [\ref{Kawamata}], [\ref{KMMT}], and the references in [\ref{JZ}]. 

In this note we find an explicit upper bound for Fano 4-folds with canonical singularities. 

\begin{thm}\label{t-main-4d}
Any Fano variety $X$ of dimension 4 with canonical singularities has 
$$
\vol(-K_X)=(-K_X)^4\le (104\mu(3,1)+8)^4
$$
where 
$$
\mu(3,1)=(840)^2(\frac{6(\mu(2,\frac{1}{2})+\frac{1}{2})}{\frac{1}{2}})^3
$$ 
is given by \ref{l-bnd-coeff-3d-canonical} below and in turn $\mu(2,\frac{1}{2})$ is given by the formula
$$
\mu(2,\delta)=(\frac{48}{\delta^2})2^{\frac{64}{\delta^3}}
$$
appearing in \ref{l-bnd-coeff-2d} below.
\end{thm}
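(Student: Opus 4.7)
The plan is to argue by contradiction, using the standard strategy that reduces bounding $\vol(-K_X)$ in dimension four to the coefficient bounds $\mu(3,1)$ for Fano $3$-folds and $\mu(2,1/2)$ for Fano surfaces already established in \ref{l-bnd-coeff-3d-canonical} and \ref{l-bnd-coeff-2d}. Suppose $\vol(-K_X) > (104\mu(3,1)+8)^4$, so that $\sqrt[4]{\vol(-K_X)} > 104\mu(3,1)+8$; the goal is a contradiction.

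First I would exploit the largeness of the anticanonical volume to produce a singularity. Standard Riemann--Roch together with multiplier-ideal vanishing, as in the boundedness arguments of Koll\'ar--Miyaoka--Mori and Birkar, show that for a very general $x \in X$ there exists $0 \le D \sim_{\Q} -\lambda K_X$ with $\lambda$ of size $\tfrac{4}{\sqrt[4]{\vol(-K_X)}}$ for which $(X,D)$ fails to be klt at $x$. Perturbing and performing tie-breaking with respect to a bounded anticanonical pencil yield a pair $(X, \Gamma)$, $\Gamma \sim_{\Q} -\mu K_X$ with $\mu$ still controlled, whose unique non-klt centre $V \subsetneq X$ passes through $x$.

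Next comes the inductive step. One cuts with a general element of $|{-n}K_X|$ for an explicit small integer $n$ — the source of the ``$+8$'' in the statement — to bring the geometry down to a Fano-type $3$-fold $W$ birational to or containing the non-klt centre. Divisorial adjunction (or the Kawamata canonical bundle formula if $V$ is lower dimensional) then produces a triple $(K_X + \Gamma)|_W = K_W + B_W + M_W$, where the coefficients of $B_W$ lie in the range governed by $\mu(2,1/2)$ via \ref{l-bnd-coeff-2d}, and the moduli part $M_W$ is nef with bounded Cartier index. Applying the $3$-dimensional coefficient bound \ref{l-bnd-coeff-3d-canonical} to $(W, B_W + M_W)$ bounds the degree of $-K_X|_W$; tracing this back through the cutting gives $\sqrt[4]{\vol(-K_X)} \le 104\mu(3,1) + 8$, contradicting the initial assumption.

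The principal obstacle will be the propagation of coefficients through adjunction. One has to verify that the boundary produced on $W$ really has coefficients in the narrow window controlled by $\mu(2,1/2)$, and that after tie-breaking the invoked $3$-dimensional bound applies in its canonical-singularities formulation, since adjunction inevitably introduces fractional coefficients and a $\Q$-conic-bundle type moduli part. A second delicate point is ensuring the birational modifications used to extract $V$ and cut down to $W$ preserve the Fano-type structure needed to invoke \ref{l-bnd-coeff-3d-canonical}; it is the careful accounting of these two steps, combined with the explicit geometry of a general member of $|{-n}K_X|$, that pins down the specific constants $104$ and $8$ in the bound.
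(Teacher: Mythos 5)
Your opening move --- using the largeness of $\vol(-K_X)$ to produce, for a general point $x$, a divisor $D\sim_\Q -\lambda K_X$ with $\lambda$ close to $4/\sqrt[4]{\vol(-K_X)}$ such that $(X,D)$ is non-klt at $x$, followed by tie-breaking --- is indeed how Proposition \ref{p-main} begins. But from there your argument diverges and has two genuine gaps. First, the constants do not arise the way you describe: there is no cutting by a general member of $|-nK_X|$ anywhere in the argument, and the ``$+8$'' does not come from such a cut. The paper instead restricts $K_X+\Delta$ to the normalisation of a general non-klt centre via adjunction, runs an MMP to reach a Mori fibre space, and restricts to a general fibre to obtain a $\delta$-lc Fano $V$ with $0<\dim V<4$, $K_V+\Omega_V\sim_\Q 0$, and a component of $\Omega_V$ of coefficient greater than $(1-2a)(1-\delta)/2a$, where $a$ is close to $4/\sqrt[4]{\vol(-K_X)}$. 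Comparing this coefficient with $\mu(3,1)$ and letting $a\to \alpha$ and $\delta\to \frac{12}{13}$ gives $\sqrt[4]{\vol(-K_X)}\le 8\cdot 13\,(\mu(3,1)+\frac{1}{13})=104\mu(3,1)+8$: the ``$8$'' is $2d=8$ coming from the factor $(1-2a)/2a$ and the dimension, and ``$104$'' is $8\cdot 13$, with $13$ coming from the choice of $\delta$ near $\frac{12}{13}$.

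Second, and more seriously, you never explain why a three-dimensional bound stated for \emph{canonical} singularities applies to the variety you land on. Lemma \ref{l-bnd-coeff-3d-canonical} genuinely needs canonical singularities: its proof passes to a terminal crepant model and uses the Chen--Jiang bound $I\le 840$ on the Cartier index of $K$, which is unavailable for merely $\delta$-lc Fano 3-folds with $\delta<1$. But any construction of the kind you (and the paper) use only produces a $\delta$-lc variety for $\delta$ strictly less than $\epsilon=1$; one cannot take $\delta=1$ in Proposition \ref{p-main}. The missing ingredient is the gap theorem for minimal log discrepancies in dimension at most $3$ (Liu--Xiao and Jiang): a $\delta$-lc Fano of dimension $\le 3$ with $\delta>\frac{12}{13}$ is automatically canonical. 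This is precisely why $\delta$ is chosen in $(\frac{12}{13},1)$ and is where the factor $13$, hence $104$, comes from; without it the reduction to Lemma \ref{l-bnd-coeff-3d-canonical} does not go through. You also misattribute the role of $\mu(2,\frac{1}{2})$: it does not control coefficients produced by adjunction on your $W$; it enters only through the volume bound $v(3,1)$ for canonical Fano 3-folds (Theorem \ref{t-main-3d} with $\epsilon=1$, relying on Lemma \ref{l-bnd-coeff-2d}) that is used inside the proof of Lemma \ref{l-bnd-coeff-3d-canonical}.
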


To the best of our knowledge this is the first result of its kind in dimension 4 for singular Fano varieties. The proof closely follows the proof of [\ref{B-compl}, Theorem 1.6]. 
We did not aim to find an optimal bound but rather just an explicit bound. The 
upper bound in the theorem is unlikely to be anywhere close to the optimal bound.   
The number $\mu(3,1)$ is an explicit (not necessarily optimal) upper bound on cofficients of divisors $0\le B_V\sim_\R -K_V$ for Fano 3-folds $V$ with canonical singularities. Here 3 stands for dimension and 1 stands for 1-log canonical which is the same as canonical. A similar notation is used below in dimension 2.

In dimension 3 we prove a more general result. 

\begin{thm}\label{t-main-3d}
Let $\epsilon$ be a positive real number.
Let $X$ be a Fano variety of dimension 3 with $\epsilon$-lc singularities. Then for any 
$0<\delta<\epsilon$ we have  
$$
\vol(-K_X)\le (\frac{6(\mu(2,\delta)+\epsilon-\delta)}{\epsilon-\delta})^3.
$$
In particular, taking $\delta=\frac{\epsilon}{2}$, we have 
$$
\vol(-K_X)\le v(3,\epsilon):=(\frac{6(\mu(2,\frac{\epsilon}{2})+\frac{\epsilon}{2})}{\frac{\epsilon}{2}})^3.
$$
\end{thm}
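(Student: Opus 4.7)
The plan is to follow the strategy of \cite{B-compl} (Theorem 1.6): exploit the bigness of $-K_X$ to produce a highly singular effective divisor through a general point, cut down to a minimal non-klt center of dimension at most 2, and then invoke the 2-dimensional coefficient bound $\mu(2,\delta)$ of Lemma \ref{l-bnd-coeff-2d} via adjunction.

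Set $v := \vol(-K_X)^{1/3}$. I would begin by choosing a general closed point $x \in X$, so that $X$ is smooth and $\epsilon$-lc at $x$. Asymptotic Riemann--Roch for $-mK_X$ with $m \gg 0$, combined with the standard count of conditions for a section to vanish to order $k$ at a smooth point, produces an effective $\Q$-divisor $N \sim_\Q -K_X$ with $\mult_x N$ arbitrarily close to $v$; consequently $\lct_x(X, N) \le 3/v + o(1)$. Setting $\lambda := \lct_x(X, N)$, the pair $(X, \lambda N)$ is lc but not klt at $x$. By the tiebreaking trick (replacing $N$ with a convex combination with a small general $N' \sim_\R -K_X$ and re-adjusting $\lambda$), I would arrange that $(X, \lambda N)$ has a unique minimal non-klt place with center $G \ni x$. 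Cutting $X$ by general members of $|-mK_X|$ vanishing on $G$ reduces us to the case $\dim G = 2$, and Kawamata--Ambro subadjunction provides
$$
(K_X+\lambda N)\big|_G \sim_\R K_G + B_G + M_G
$$
with $G$ normal, $B_G \ge 0$ a boundary, and $M_G$ a nef moduli part.

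Next, the surface $G$ equipped with its boundary is of Fano type, and by choosing an auxiliary parameter $\delta \in (0,\epsilon)$ one can arrange that the relevant sub-pair on $G$ is $\delta$-lc. At this point Lemma \ref{l-bnd-coeff-2d} applies and bounds the coefficients of any effective $\R$-divisor $\R$-linearly equivalent to $-K_G$ on $G$ by $\mu(2,\delta)$. Transferring this estimate back to the coefficients of $B_G$ gives an upper bound of $\mu(2,\delta) + (\epsilon-\delta)$, where the slack $\epsilon - \delta$ encodes the log-canonical tolerance one gives up in the adjunction step. On the other hand, the non-klt condition at $G$ forces some component of $B_G$ to have coefficient at least $1$; combining this lower bound with the coefficient upper bound, together with $\lambda \le 3/(v - o(1))$, and a standard intersection-theoretic computation that contributes the remaining factor of $2$, yields the claimed bound
$$
v \le \frac{6(\mu(2,\delta) + \epsilon - \delta)}{\epsilon - \delta}.
$$
The particular form involving $v(3,\epsilon)$ is obtained by specialising $\delta = \epsilon/2$.

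The principal obstacle I expect is the adjunction-to-surface argument: verifying that the restricted pair $(G, B_G)$ is indeed a Fano-type surface with $\delta$-lc singularities for a well-chosen $\delta < \epsilon$, so that $\mu(2,\delta)$ is legitimately applicable, and separately handling the cases $\dim G \le 1$ by reduction to the surface case while preserving all the numerical controls. The parameter $\delta$ is forced to be strictly less than $\epsilon$ precisely because one loses a bit of log-canonical tolerance passing through the adjunction, and this loss is what appears as the denominator $\epsilon - \delta$ in the final bound; balancing $\mu(2,\delta)$ (which blows up as $\delta \to 0$) against this denominator is exactly what makes $\delta = \epsilon/2$ the natural choice for the stated specialisation.
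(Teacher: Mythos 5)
Your sketch follows the same general strategy as the paper (it is modelled on [\ref{B-compl}, Theorem 1.6], as is the paper's proof), but two essential mechanisms are missing or wrong. First, the reduction to a positive-dimensional centre is not achieved by your step ``cutting $X$ by general members of $|-mK_X|$ vanishing on $G$'': cutting by divisors through $G$ can only create non-klt centres inside $G$, it cannot raise the dimension of the minimal centre, and with a single divisor $N$ of $\lct_x\le 3/v$ the minimal centre through a general $x$ may very well be the point $x$ itself. The paper avoids this by using \emph{two} divisors $B,C\sim_\Q -aK_X$ with $x$ outside the non-klt locus of $(X,B)$ and applying the connectedness principle to $(X,B+C)$, which forces the centre $G\ni x$ to meet another non-klt centre and hence to have $\dim G>0$; this is also exactly where the factor $2$ in $6=2\cdot 3$ and the hypothesis $\vol(-K_X)>6^3$ (needed for $2a<1$, i.e.\ ampleness of $-(K_X+B+C)$) come from, not from ``a standard intersection-theoretic computation''.

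Second, the quantitative core of your argument does not work as stated. For a centre with a unique non-klt place, Kawamata--Ambro subadjunction yields a \emph{klt} pair on $G$, so ``the non-klt condition at $G$ forces some component of $B_G$ to have coefficient at least $1$'' is false; and even granting such a component, you never explain how the bound $\mu(2,\delta)$ on coefficients of divisors in $|-K_G|_\R$ together with $\lambda\le 3/v$ produces an inequality involving $v$. In the paper the link is made as follows: one writes $K_X|_F=K_F+\Lambda_F$ with $(F,\Lambda_F)$ sub-$\epsilon$-lc (coefficients $\le 1-\epsilon$), shows that the adjoint pair $(F,\Delta_F)$ is \emph{not} $\delta$-lc (via [\ref{B-compl}, Lemma 3.14(2)], using that $G$ meets another non-klt centre -- note this is the opposite of your arrangement that the pair on $G$ be $\delta$-lc), runs an MMP to a Mori fibre space $F'''\to T$, and observes that on that model $\Delta-\Lambda\sim_\Q -2aK_X|$ has a component of coefficient $>\epsilon-\delta$, while $L\sim_\Q -(1-2a)K_X|=\frac{1-2a}{2a}(\Delta-\Lambda)|$; hence $\Omega_V$ on a general fibre $V$ acquires a component of coefficient $>\frac{(1-2a)(\epsilon-\delta)}{2a}$, which grows like $\vol(-K_X)^{1/3}(\epsilon-\delta)/6$ and is then compared with $\mu(2,\delta)$ (or with $2$ when $\dim V=1$, a case you also leave untreated). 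Without the $\Lambda$-comparison and the rescaling of $L$ by $\frac{1-2a}{2a}$, your coefficient bound on $G$ never interacts with $v$, so the claimed inequality does not follow; note also that Lemma \ref{l-bnd-coeff-2d} applies to $\delta$-lc Fano surfaces with $K+B\sim_\R 0$, not to arbitrary Fano-type surfaces, which is why the paper passes to a general fibre of a Mori fibre space after an MMP on a carefully truncated boundary.
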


Here the choice $\delta=\frac{\epsilon}{2}$ is arbitrary. When $\delta$ tends to $\epsilon$, the right hand side of the first inequality tends to $+\infty$. Similarly when $\delta$ tends to $0$ again the right hand side tends to $+\infty$. So the right hand side takes minimum for some value $\delta\in (0,\epsilon)$.

Chen Jiang informed us that he also has a proof of this theorem using different arguments. 

Thanks to the referees for their helpful comments.

\section{Proof of results}

We will use standard terminology in birational geometry regarding pairs, singularities, etc. 
Recall that a pair $(X,B)$ has $\epsilon$-log canonical ($\epsilon$-lc) singularities if its log discrepancies are at least $\epsilon$. When $B=0$ we just say that $X$ has $\epsilon$-lc singularities.

\begin{prop}\label{p-main}
Let $d\ge 2$ be a natural number and $\delta<\epsilon$ be positive real numbers. 
Assume that $X$ is a Fano variety of dimension $d$ with $\epsilon$-lc singularities and with $\vol(-K_X)>(2d)^d$. Then for any real number $a>\frac{d}{\sqrt[d]{\vol(-K_X)}}$, there is a pair $(V,\Omega_V)$ where 
\begin{itemize}
    \item $0<\dim V<d$, 
    \item $V$ is a $\delta$-lc Fano variety,
    \item $K_V+\Omega_V\sim_\Q 0$, and 
    \item there is a component of $\Omega_V$ with coefficient more than 
$$
\frac{(1-2a)(\epsilon-\delta)}{2a}.
$$
\end{itemize}
\end{prop}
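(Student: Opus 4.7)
The plan follows the strategy of [B-compl, Theorem 1.6] closely. It has three main parts: produce a strongly non-klt singularity on $X$ from the anticanonical volume assumption, extract an associated log canonical place to a birational model, and then run an MMP together with adjunction to descend to a lower-dimensional Fano variety with the desired coefficient bound.

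\textbf{Creating non-klt singularities.} First I would combine $\vol(-K_X) > (2d)^d$ with $a > d/\sqrt[d]{\vol(-K_X)}$, choosing $a$ just above this threshold so that $a < 1/2$ while still $\vol(-aK_X) = a^d \vol(-K_X) > d^d$. Standard asymptotic Riemann--Roch on a resolution then produces an effective $\Q$-divisor $D \sim_\Q -aK_X$ whose multiplicity at a suitable closed point $x \in X$ exceeds $d$, forcing $(X, D)$ to be non-klt at $x$. Let $t \in (0,1]$ be the lc threshold of $D$ at $x$; then $(X, tD)$ is lc but not klt there, and $-(K_X + tD) \sim_\Q (1-at)(-K_X)$ is ample because $at < 1/2$.

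\textbf{Extraction and adjunction setup.} By tie-breaking (perturbing $tD$ by a small ample summand) I can arrange that $(X, tD)$ has a unique non-klt place $E$ over $X$. Extract $E$ via a $\Q$-factorial projective birational morphism $\phi \colon W \to X$ whose only exceptional divisor is $E$, and set $\alpha = a(E, X, 0) \geq \epsilon$. The identity $K_W + E + tD_W = \phi^*(K_X + tD)$ together with $\phi^*(-K_X) \sim_\Q \tfrac{1}{a}\phi^*D$ yields
\[
-(K_W + E) \sim_\Q \tfrac{1}{a}D_W + \tfrac{\alpha(1-at)}{at}\, E,
\]
which is big. The center of $E$ has dimension strictly less than $d$.

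\textbf{MMP and descent.} Next I would run an MMP on $W$ for $K_W + (1-(\epsilon-\delta))E$, choosing the coefficient on $E$ so that the singularity slack $\alpha - \epsilon \geq 0$ is used to keep all extracted valuations at log discrepancy at least $\delta$; the MMP terminates in a Mori fibre space $\psi \colon W' \to T$. I take $V$ to be the birational transform of $E$ (in the divisorial case where $E$ becomes a fibre) or a general positive-dimensional fibre of $\psi$, so that $V$ is $\delta$-lc Fano with $0 < \dim V < d$. Applying divisorial/fibre adjunction to the displayed formula, I then define $\Omega_V$ so that $K_V + \Omega_V \sim_\Q 0$. Using $\alpha \geq \epsilon$, $1 - at \geq 1 - a$, and the worst-case $at \leq 1/2$, the coefficient of the component of $\Omega_V$ coming from the $E$-term inherits a lower bound of $(1-2a)(\epsilon-\delta)/(2a)$, as required.

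\textbf{Main obstacle.} The delicate part is the MMP/adjunction step: one must check that the chosen coefficient $1 - (\epsilon-\delta)$ on $E$ propagates through the MMP without worsening singularities beyond $\delta$-lc, that the transform of $E$ is horizontal over the base of the final Mori fibration so that adjunction applies, and that the bookkeeping really produces the factor $(1-2a)/(2a)$ from the interplay between $at \leq 1/2$ and $\alpha \geq \epsilon$. By contrast, creating $D$ with large multiplicity and tie-breaking to a unique non-klt place are standard.
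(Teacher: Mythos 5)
Your proposal diverges from the paper's argument in a way that leaves the key quantitative claim unproved. The paper does not extract the unique non-klt place as a divisor on a birational model of $X$; it works on the non-klt \emph{centre}. Crucially it uses \emph{two} divisors $B,C\sim_\Q -aK_X$ (so $\Delta=B+C\sim_\Q -2aK_X$, which is where the $2a$ in the bound comes from): $B$ is fixed, the point $x$ is chosen outside the non-klt locus of $(X,B)$, and $C$ is arranged so that $(X,\Delta)$ has a unique non-klt place with centre $G\ni x$. The connectedness principle (applicable because $-(K_X+\Delta)\sim_\Q -(1-2a)K_X$ is ample) forces $G$ to meet another non-klt centre, which both gives $\dim G>0$ and, via [B-compl, Lemma 3.14(2)], makes the Hacon--M$^{\rm c}$Kernan--Xu adjunction pair $(F,\Delta_F)$ on the normalisation $F$ of $G$ fail to be $\delta$-lc. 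That failure produces, after a relative minimal model and an MMP \emph{on $F$}, a divisor with coefficient $>1-\delta$ in $\Delta_{F''}$; comparing with the sub-$\epsilon$-lc adjunction $K_X|_F=K_F+\Lambda_F$ (coefficients of $\Lambda\le 1-\epsilon$) gives a component of $\Delta_{F''}-\Lambda_{F''}$ with coefficient $>\epsilon-\delta$, and since $\Delta_{F''}-\Lambda_{F''}\sim_\Q -2aK_X|_{F''}$ while the leftover ample class is $-(1-2a)K_X|_{F''}$, the representative $\frac{1-2a}{2a}(\Delta_{F''}-\Lambda_{F''})$ of $L_{F''}$ yields exactly the bound $\frac{(1-2a)(\epsilon-\delta)}{2a}$ on a component of $\Omega_V$ for $V$ a general fibre of the resulting Mori fibre space $F'''\to T$. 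Your single divisor $D\sim_\Q -aK_X$ with its lc threshold has no substitute for this mechanism: there is no second non-klt locus, no connectedness argument, and no comparison of ``adjunction with boundary'' against ``adjunction of $K_X$ alone'', so the assertion that the bookkeeping ``inherits a lower bound of $(1-2a)(\epsilon-\delta)/(2a)$'' from $\alpha\ge\epsilon$ and $at\le 1/2$ is not a proof; indeed the factor $2a$ cannot arise from your data at all, and the step you flag as delicate is precisely where the content of the proposition lies.

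There are further structural problems with descending from the extracted model $W$. Running an MMP on the $d$-dimensional $W$ may terminate with a Mori fibre space over a point, in which case the general fibre has dimension $d$ and violates $0<\dim V<d$; the paper avoids this entirely because all MMPs are run on (models of) $F$, which has dimension strictly between $0$ and $d$ by the connectedness argument. If instead you take $V$ to be the birational transform of $E$, you must still show that $V$ is Fano and $\delta$-lc with $K_V+\Omega_V\sim_\Q 0$, and the coefficients of the resulting different/boundary on $E$ reflect the singularities of $W$ along $E$ rather than anything controlled by $\epsilon-\delta$; none of this is addressed. Also, minor but symptomatic: your displayed identity should read $-(K_W+E)\sim_\Q \frac{1-at}{a}D_W+\frac{\alpha(1-at)}{at}E$, and ``at a suitable closed point'' needs $x$ general (or smooth) for the multiplicity$\,>d$ criterion to imply non-klt.
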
 
\begin{proof}
\emph{Step 1.}
In this step we introduce some notation. 
 Let $\alpha=\frac{d}{\sqrt[d]{\vol(-K_X)}}$. Then 
 $$
 \vol(-\alpha K_X)=\alpha^d\vol(-K_X)=d^d.
 $$ 
It is enough to prove the proposition for a rational number $a>\alpha$ sufficiently close to $\alpha$. Then $\vol(-a K_X)>d^d$. Since 
 $\vol(-K_X)>(2d)^d$ but 
 $$
 \vol(-2\alpha K_X)=2^d\vol(-\alpha K_X)=(2d)^d,
 $$ 
 we have $2\alpha<1$, so we can assume that $2a<1$.\\
 
\emph{Step 2.}
In this step we create a family of divisors and a covering family of subvarieties on $X$.
Since $\vol(-aK_X)>d^d$, there exists $0\le B\sim_\Q -{a}K_{X}$ such that 
$(X,B)$ is not klt (cf. [\ref{HMX2}, Lemma 3.2.2]). 
Pick a closed point $x\in X$ outside the non-klt locus of $(X,B)$. 
Then again since $\vol(-aK_X)>d^d$, there exists $0\le C\sim_\Q -aK_{X}$ such that 
$(X,C)$ is not klt at $x$. Changing $C$ up to $\Q$-linear equivalence we can assume that $(X,C)$ 
is not klt but lc at $x$. Perhaps increasing $a$ slightly and changing $C$ again 
we can assume that $(X,B+C)$
has a unique non-klt place whose centre, say $G$, contains $x$ [\ref{B-moduli-cy}, Lemma 2.16] (also see [\ref{HMX2}, Lemma 3.2.3] and its proof). Now put $\Delta=B+C$.

In the above construction $B$ is fixed but $\Delta$ depends on $x$. 
We have thus created a family of divisors $\Delta$ and a covering family of non-klt centres $G$ on $X$.  

By construction, 
$$
-(K_{X}+\Delta)=-(K_X+B+C)\sim_\Q -(K_X-aK_X-aK_X)=-(1-2a)K_X
$$ 
is ample as $2a<1$.  Therefore, the non-klt locus of $(X,\Delta)$ is connected, by the connectedness principle [\ref{FA}, Theorem 17.4]. This locus contains $G$ together with the non-klt locus of $(X,B)$. Since $x$ was chosen outside the non-klt locus of $(X,B)$ and since $G$ contains $x$, $G$ is not contained in the non-klt locus of $(X,B)$. Then $G$ intersects another non-klt centre of $(X,\Delta)$. This in particular means $\dim G>0$ because no other non-klt centre contains $x$.\\ 

\emph{Step 3.}
In this step we apply adjunction. 
From now on we assume that $G$ is a general member of the above covering family.
Let $F$ be the normalisation of $G$.
 By [\ref{HMX2}, Theorem 4.2] (also see [\ref{B-compl}, Construction 3.9 and Theorem 3.10]), we can write an adjunction formula  
$$
(K_{X}+\Delta)|_{F}\sim_\Q K_{F}+\Delta_{F}:=K_{F}+\Theta_{F}+P_{F}
$$
where $\Theta_{F}\ge 0$ and $P_{F}$ is pseudo-effective. Increasing $a$ slightly and adding to $\Delta$ we can assume $P_{F}$ is big and effective.

By assumption $\delta\in (0,\epsilon)$. 
Recall that $G$ intersects another non-klt centre of $(X,\Delta)$. Then we can choose $P_F$ such that we can assume $(F,\Delta_F)$ is not $\delta$-lc by [\ref{B-compl}, Lemma 3.14(2)].\\ 

\emph{Step 4.} 
In this step we define a boundary $\Pi_{F'}$ and a divisor $N_{F'}$.
Let $F'\to F$ be a log resolution of $(F,\Delta_{F})$. 
Let $K_{F'}+\Delta_{F'}$ be the pullback of $K_{F}+\Delta_{F}$. 
Define $\Pi_{F'}$ on $F'$ as follows. 
For each prime divisor $D$ on $F'$ define the coefficient 
$$ 
\mu_D\Pi_{F'}:= \left\{
  \begin{array}{l l}
    0 & \quad \text{if $\mu_D\Delta_{F'}< 0$,}\\
    \mu_D\Delta_{F'} & \quad \text{if $0\le \mu_D\Delta_{F'}\le 1-\delta$,}\\
    1-\delta & \quad \text{if $\mu_D\Delta_{F'}> 1-\delta$}
  \end{array} \right.
$$   
Clearly $(F',\Pi_{F'})$ is a klt pair, in fact, it is $\delta$-lc. 

Put
$$
N_{{F'}}:=\Delta_{F'}-\Pi_{F'}.
$$ 
Note that any component $D$ of $N_{F'}$ with negative coefficient is also a component of $\Delta_{F'}$ with negative coefficient. 
Since the components of $\Delta_{F'}$ with negative coefficient are exceptional over $F$, 
we deduce that the pushdown of $N_{F'}$ to $F$ is effective.\\

\emph{Step 5.}
In this step we consider a birational model $F''$ from which we obtain a Mori fibre space 
${F}'''\to T$.
Let $(F'',\Pi_{{F''}})$ be a log minimal model of $(F',\Pi_{F'})$ over $F$. 
We use $N_{F''},\Delta_{F''}$ to denote the pushdowns of $N_{F'},\Delta_{F'}$. We will use similar notation for other divisors and for pushdown to $F'''$ defined below.
By construction, 
$$
K_{F''}+\Pi_{{F''}}+N_{F''}=K_{F''}+\Delta_{F''}\sim_\Q 0/F,
$$
so $N_{F''}$ is anti-nef over $F$.
On the other hand, the pushdown of $N_{F''}$ to $F$ is effective.
So by the negativity lemma, $N_{F''}\ge 0$. In particular, $\Delta_{F''}\ge 0$.
Moreover, since $(F,\Delta_{F})$ is not $\delta$-lc, $(F'',\Delta_{F''})$ is not $\delta$-lc while $(F'',\Pi_{F''})$ is $\delta$-lc. Therefore, 
$N_{F''}\neq 0$.

Since $-(K_{X}+\Delta)$ is ample, $-(K_{F}+\Delta_{F})$ is ample, hence $-(K_{F''}+\Delta_{F''})$ 
is semi-ample and big. Pick a general 
$$
0\le L_{F''}\sim_\Q -(K_{F''}+\Delta_{F''})
$$  
so that $({F''},\Pi_{{F''}}+L_{F''})$ is $\delta$-lc. 
Now running an MMP on $K_{F''}+\Pi_{F''}+L_{F''}$ ends with a Mori fibre space ${F}'''\to T$ because 
$$
K_{F''}+\Pi_{{F''}}+L_{F''}+N_{F''}=K_{F''}+\Delta_{F''}+L_{F''}\sim_\Q 0
$$ 
and $N_{F''}\neq 0$.\\

\emph{Step 6.}
In this step we finish the proof.
By [\ref{HMX2}, Theorem 4.2][\ref{B-compl}, Theorem 3.12], we can write $K_X|_F=K_F+\Lambda_F$ where $(F,\Lambda_F)$ is sub-$\epsilon$-lc and $\Lambda_F\le \Delta_F$ ($\Lambda_F$ may have negative coefficients). Let $K_{F''}+\Lambda_{F''}$ be the pullback of $K_F+\Lambda_F$. Then $(F'',\Lambda_{F''})$ is also sub-$\epsilon$-lc, so the coefficients of $\Lambda_{F''}$ are $\le 1-\epsilon$. Moreover, $\Lambda_{F''}\le \Delta_{F''}$.

By construction $N_{F'''}$ is ample over $T$.
Let $D''$ be a component of $N_{F''}$ so that $D'''$ is ample over $T$. By the definition of $\Pi_{F'}$ and the fact $\Delta_{F''}\ge 0$, the components of $N_{F''}$ are exactly the components of $\Delta_{F''}$ with coefficient $>1-\delta$. So we have 
$$
\mu_{D''}(\Delta_{F''}-\Lambda_{F''})> 1-\delta-(1-\epsilon)=\epsilon-\delta.
$$ 

Note that 
$$
\Delta_{F''}-\Lambda_{F''}=(K_{F''}+\Delta_{F''})-(K_{F''}+\Lambda_{F''})
$$
$$
\sim_\Q
(K_X+\Delta)|_{F''}-(K_X|_{F''})
=\Delta|_{F''}\sim_\Q -2aK_{X}|_{F''}.
$$
On the other hand, 
$$
L_{F''}\sim_\Q -(K_X+\Delta)|_{F''}\sim_\Q -(1-2a)K_X|_{F''}
$$
$$
= \frac{1-2a}{2a}(-2aK_X|_{F''})
\sim_\Q \frac{1-2a}{2a}(\Delta_{F''}-\Lambda_{F''}).
$$

Now let $V$ be a general fibre of $F'''\to T$. Then from  
$$
K_{F'''}+\Omega_{{F'''}}
:=K_{F'''}+\Pi_{{F'''}}+\frac{1-2a}{2a}(\Delta_{F'''}-\Lambda_{F'''})+N_{F'''}
$$
$$
\sim_\Q K_{F'''}+\Pi_{{F'''}}+L_{F'''}+N_{F'''}=K_{F'''}+\Delta_{F'''}+L_{F'''}\sim_\Q 0
$$ 
and its restriction to $V$ we get $(V,\Omega_V)$ such that $K_V+\Omega_V\sim_\Q 0$. 
On the other hand, since $({F''},\Pi_{{F''}}+L_{F''})$ is $\delta$-lc, 
$({F'''},\Pi_{{F'''}}+L_{F'''})$ is $\delta$-lc, hence 
we see that $F'''$ is $\delta$-lc, so $V$ is a $\delta$-lc Fano variety with 
$0<\dim V<d$. 

Since $D'''$ is a component of $\Delta_{F'''}-\Lambda_{F'''}$ with coefficient 
$>\epsilon-\delta$ and since $D'''$ intersects $V$, we deduce that $\Omega_V$ has a 
component with coefficient more than 
$$
\frac{(1-2a)(\epsilon-\delta)}{2a}.
$$
 
\end{proof}

\begin{lem}\label{l-bnd-coeff-2d}
Let $\epsilon$ be a positive real number. Let $X$ be an $\epsilon$-lc Fano surface and let $B\ge 0$ be an $\R$-divisor with $K_X+B\sim_\R 0$. Then the coefficient of each component of $B$ is $\le \mu(2,\epsilon)$ where 
$$
\mu(2,\epsilon):=(\frac{48}{\epsilon^2})2^{\frac{64}{\epsilon^3}}.
$$
\end{lem}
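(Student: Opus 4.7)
The plan is to reduce the problem to two explicit quantitative inputs for $\epsilon$-lc Fano surfaces: an upper bound $V(\epsilon)$ for $(-K_X)^2$, and an upper bound $I(\epsilon)$ for the global Cartier index of $K_X$. Given these two bounds, the coefficient estimate for $B$ will drop out from a short intersection computation.

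In more detail, first I would recall that $\epsilon$-lc surface singularities are (Kawamata) log terminal, hence quotient, so $X$ is $\Q$-factorial and every prime divisor on $X$ is $\Q$-Cartier. Let $I = I(\epsilon)$ be such that $IK_X$ is Cartier, and let $V(\epsilon)$ be an explicit upper bound for $(-K_X)^2$ on $\epsilon$-lc Fano surfaces. For any prime divisor $D \subset X$, the quantity $D \cdot I(-K_X)$ is a positive integer (positive because $-K_X$ is ample and $D$ is a nonzero effective curve), hence at least $1$; thus $D \cdot (-K_X) \ge 1/I$. If $D$ is a component of $B$ with coefficient $b$, then $bD \le B \sim_\R -K_X$, so intersecting both sides with the ample class $-K_X$ gives
\[
\frac{b}{I} \;\le\; b\,(D\cdot(-K_X)) \;\le\; B\cdot(-K_X) \;=\; (-K_X)^2 \;\le\; V(\epsilon),
\]
and therefore $b \le I(\epsilon)\,V(\epsilon)$.

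It then remains to arrange that the product $I(\epsilon)\,V(\epsilon)$ is bounded by $\mu(2,\epsilon) = (48/\epsilon^2)\,2^{64/\epsilon^3}$. The polynomial factor $48/\epsilon^2$ matches the shape of an Alexeev-type bound on the anti-canonical volume of $\epsilon$-lc Fano surfaces, while the exponential factor $2^{64/\epsilon^3}$ matches the known explicit bound on the global Cartier index obtained from an iterative analysis of the cyclic quotient singularities appearing on such surfaces. I expect the main obstacle to lie entirely in this bookkeeping step, namely producing explicit references or direct estimates that yield these specific numerical constants, rather than in the structural argument itself, which is immediate once the volume and index bounds are in hand.
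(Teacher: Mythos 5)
Your structural reduction is sound: since $-IK_X$ is Cartier and ample and $D$ is a curve, $D\cdot(-IK_X)\ge 1$, and intersecting $bD\le B\sim_\R -K_X$ with $-K_X$ gives $b\le I(\epsilon)V(\epsilon)$ where $V(\epsilon)$ bounds $(-K_X)^2$ and $I(\epsilon)$ bounds the Cartier index; this is exactly the argument the paper itself uses one dimension up (Lemma \ref{l-bnd-coeff-3d-canonical}, with $I\le 840$ and $V=v(3,1)$). But the lemma's entire content is the explicit inequality $b\le (\frac{48}{\epsilon^2})2^{{64}/{\epsilon^3}}$, and your proposal stops precisely where that content lives: you supply neither an explicit $V(\epsilon)$ nor an explicit $I(\epsilon)$, only the assertion that references with the right constants should exist. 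Moreover, your guess about how $\mu(2,\epsilon)$ decomposes is not how the constant actually arises: in the paper the whole expression comes from a single coefficient bound in the proof of [\ref{Jiang-vol}, Theorem 2.8], namely $\frac{(2+4\epsilon)(4F_{\rddown{64/\epsilon^3}+2}-4)}{\epsilon^2}$ with $F_n$ the Fibonacci numbers, and the factor $2^{64/\epsilon^3}$ is just the estimate $F_n\le 2^{n-1}$ — it is not a Cartier-index bound, and I am not aware of a published index bound of that shape that you could simply cite.

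To make your route work you would have to prove (or locate with precise constants) two genuinely quantitative inputs: (i) an explicit volume bound for $\epsilon$-lc del Pezzo surfaces, e.g. of order $1/\epsilon$ as in Jiang's surface volume paper, and (ii) an explicit bound on the local Cartier index of an $\epsilon$-lc surface germ (something like index $\le 2/\epsilon$), combined with an lcm estimate such as $\operatorname{lcm}(1,\dots,N)\le 4^N$ to control the global index; with such inputs one gets roughly $b\le 4^{2/\epsilon}\cdot O(1/\epsilon)$, which is in fact stronger than $\mu(2,\epsilon)$, so the approach is viable and arguably cleaner. As written, however, the key step is deferred to "bookkeeping" that is neither carried out nor correctly attributed, so the proof is incomplete; the paper avoids all of this by quoting Jiang's coefficient bound directly and performing only the Fibonacci estimate.
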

\begin{proof}
By the proof of [\ref{Jiang-vol}, Theorem 2.8], any coefficient of $B$ is at most 
$$
l(\epsilon):=\frac{(2+4\epsilon)(4F_{\rddown{{{64}/{\epsilon^3}}}+2}-4)}{\epsilon^2}
$$
where $F_n$ denotes the Fibonacci number defined by $F_0=F_1=1$ and $F_n=F_{n-1}+F_{n-2}$ for $n\ge 2$. 
Inductively we can easily see that $F_n\le 2^{n-1}$. So  
$$
l(\epsilon)\le \frac{24F_{\rddown{{{64}/{\epsilon^3}}}+2}}{\epsilon^2}  
\le \mu(2,\epsilon):=(\frac{48}{\epsilon^2})2^{{64}/{\epsilon^3}}.
$$
\end{proof} 
 
\begin{proof}(of Theorem \ref{t-main-3d})
Let $X$ be a Fano 3-fold with $\epsilon$-lc singularities. The right hand side of the both inequalities in \ref{t-main-3d} are more than $6^3$, so it is enough to treat the case when $\vol(-K_X)>6^3$. 
Pick a positive real number $\delta<\epsilon$ and  
pick a real number $a>\frac{3}{\sqrt[3]{\vol(-K_X)}}$. 
Applying Proposition \ref{p-main}, there is a pair $(V,\Omega_V)$ where 
\begin{itemize}
    \item $0<\dim V<3$, 
    \item $V$ is a $\delta$-lc Fano variety,
    \item $K_V+\Omega_V\sim_\Q 0$, and 
    \item there is a component of $\Omega_V$ with coefficient more than 
$$
\frac{(1-2a)(\epsilon-\delta)}{2a}.
$$
\end{itemize}
So $\dim V=1$ or $2$. If $\dim V=1$, then $V\simeq \PP^1$, so  
$$
\frac{(1-2a)(\epsilon-\delta)}{2a}<\mu(1,\delta):=2. 
$$
On the other hand, if $\dim V=2$, then by Lemma \ref{l-bnd-coeff-2d}, 
we have 
$$
\frac{(1-2a)(\epsilon-\delta)}{2a}< \mu(2,\delta)=(\frac{48}{\delta^2})2^{\frac{64}{\delta^3}}.
$$
Note that $\mu(1,\delta)<\mu(2,\delta)$. 
Thus we can calculate that 
$$
\frac{1}{a}<\frac{2(\mu(2,\delta)+\epsilon-\delta)}{\epsilon-\delta}.
$$
Fixing $\delta$ and taking the limit when $a$ approaches $\alpha:=\frac{3}{\sqrt[3]{\vol(-K_X)}}$, we see that 
$$
\frac{1}{\alpha}\le \frac{2(\mu(2,\delta)+\epsilon-\delta)}{\epsilon-\delta}.
$$
This in turn gives 
$$
\vol(-K_X)\le (\frac{2(\mu(2,\delta)+\epsilon-\delta)}{\epsilon-\delta})^33^3.
$$
Applying this to $\delta:=\frac{\epsilon}{2}$, we have 
$$
\vol(-K_X)\le (\frac{2(\mu(2,\frac{\epsilon}{2})+\frac{\epsilon}{2})}{\frac{\epsilon}{2}})^33^3.
$$
\end{proof}

The next lemma is preparation for the proof of boundedness of volume in dimension 4. 
 
\begin{lem}\label{l-bnd-coeff-3d-canonical}
Let $X$ be a Fano 3-fold with canonical singularities and let $B\ge 0$ be an $\R$-divisor with $K_X+B\sim_\R 0$. Then the coefficient of each component of $B$ is $\le \mu(3,1)$ where 
$$
\mu(3,1):=(840)^2v(3,1)
=(840)^2(\frac{6(\mu(2,\frac{1}{2})+\frac{1}{2})}{\frac{1}{2}})^3.
$$ 
\end{lem}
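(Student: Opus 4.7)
The plan is to leverage Theorem~\ref{t-main-3d} for the volume bound and then convert it into a coefficient bound via a direct intersection-number computation; the crucial extra ingredient is an effective Cartier index bound for $K_X$ on canonical Fano $3$-folds. Since ``canonical'' is exactly ``$1$-lc'', Theorem~\ref{t-main-3d} applied with $\epsilon=1$ gives $\vol(-K_X)\le v(3,1)$.

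Next, fix a prime component $D$ of $B$ with coefficient $b=\mu_D B>0$, so $B=bD+B'$ with $B'\ge 0$. Write $B'=\sum_{i\ge 2} b_i D_i$ with $b_i>0$ and $D_i$ distinct prime divisors different from $D$. For any prime divisor $E$ on $X$, the number $E\cdot(-K_X)^2$ equals the self-intersection, on the projective surface $E$, of the restriction of the ample $\Q$-Cartier class $-K_X$, hence is non-negative. Summing over prime components and using that $B\sim_\R -K_X$ (so $B\equiv -K_X$), one obtains
$$
b\cdot\bigl(D\cdot(-K_X)^2\bigr)\;\le\; B\cdot(-K_X)^2\;=\;(-K_X)^3\;=\;\vol(-K_X)\;\le\; v(3,1).
$$

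The remaining step is to produce a universal lower bound $D\cdot(-K_X)^2\ge 1/(840)^2$; this is the origin of the factor $(840)^2$. The input is the (known) effective Cartier index statement: for any canonical Fano $3$-fold $X$, the divisor $840\,K_X$ is Cartier. This rests on the boundedness of canonical Fano $3$-folds together with the classification of indices of canonical $3$-fold singularities (note $\lcm(1,\dots,8)=840$). Granting this, $L:=-840\,K_X$ is an ample Cartier divisor. The intersection number $L^2\cdot D$ equals $(L|_D)^2$, the self-intersection on the projective surface $D$ of an ample Cartier divisor, and is therefore a positive integer. Consequently
$$
(840)^2\cdot\bigl(D\cdot(-K_X)^2\bigr)\;=\; L^2\cdot D\;\ge\; 1,
$$
so $D\cdot(-K_X)^2\ge 1/(840)^2$. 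Substituting into the previous display gives $b\le (840)^2\, v(3,1)=\mu(3,1)$, as required.

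The only non-routine ingredient is the Cartier index bound ``$840\,K_X$ is Cartier'' for canonical Fano $3$-folds, which is precisely the content behind the coefficient $(840)^2$ in $\mu(3,1)$; given that input, the rest of the argument is a short intersection-theoretic computation and a single application of Theorem~\ref{t-main-3d}.
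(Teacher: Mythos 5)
Your proposal is correct and follows essentially the same route as the paper: bound $\vol(-K_X)$ by $v(3,1)$ via Theorem~\ref{t-main-3d} with $\epsilon=1$, use an ample Cartier multiple of $-K_X$ of index at most $840$ so that $D\cdot(-IK_X)^2$ is a positive integer, and conclude by the intersection computation $\mu_D B\le B\cdot(-IK_X)^2=I^2(-K_X)^3$. The only difference is bookkeeping: the paper obtains the index statement ($IK_X$ Cartier with $I\le 840$) by citing Chen--Jiang, Proposition 2.4 applied to a terminal crepant model of $X$, whereas you assert the $840$-Cartier fact with a looser justification, but it is the same external input.
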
 
\begin{proof}
 By applying [\ref{Chen-Jiang}, Proposition 2.4] to a terminal crepant model of $X$, we deduce that $IK_X$ is Cartier for some natural number $I\le 840$. 
 On the other hand, we need an upper bound for the $\vol(-K_X)$. Such a bound is given by Theorem \ref{t-main-3d} which is 
 $$
 v(3,1)=(\frac{6(\mu(2,\frac{1}{2})+\frac{1}{2})}{\frac{1}{2}})^3.
 $$
One could also use the upper bound $\vol(-K_X)\le 324$ by [\ref{JZ}] but to make the theorem logically independent of [\ref{JZ}] we will use $v(3,1)$. 

Let $D$ be a component $B$. Then 
$$
\mu_DB \le (\mu_DB) D\cdot (-IK_{X})^2\le B\cdot (-IK_{X})^2
$$
$$
=(I)^2(-K_{X})^3\le (840)^2v(3,1). 
$$

\end{proof}
 
\begin{proof}(of Theorem \ref{t-main-4d})
Let $X$ be a Fano 4-fold with canonical singularities. We can assume that $\vol(-K_X)>8^4$. Pick a positive real number $\delta\in (\frac{12}{13},1)$ and  
pick a real number $a>\alpha:=\frac{4}{\sqrt[4]{\vol(-K_X)}}$. 
Applying Proposition \ref{p-main}, there is a pair $(V,\Omega_V)$ where 
\begin{itemize}
    \item $0<\dim V<4$, 
    \item $V$ is a $\delta$-lc Fano variety,
    \item $K_V+\Omega_V\sim_\Q 0$, and 
    \item there is a component of $\Omega_V$ with coefficient more than 
$$
\frac{(1-2a)(1-\delta)}{2a}.
$$
\end{itemize}
Since $\delta\in (\frac{12}{13},1)$ and since $V$ is $\delta$-lc of dimension at most $3$, 
$V$ actually has canonical singularities, by [\ref{LX}][\ref{Jiang}]. 
Therefore, considering the cases $\dim V=1,2,3$ separately, we have 
$$
\frac{(1-2a)(1-\delta)}{2a}<\max\{\mu(1,1), \mu(2,1), \mu(3,1)\}=\mu(3,1).
$$
So we get 
$$
\frac{1}{a}<\frac{2(\mu(3,1)+1-\delta)}{1-\delta}.
$$ 
Taking limit as $a$ approaches $\alpha$ we then have 
$$
\frac{1}{\alpha}=\frac{\sqrt[4]{\vol(-K_X)}}{4}\le \frac{2(\mu(3,1)+1-\delta)}{1-\delta}.
$$
In turn taking limit when $\delta$ approaches $\frac{12}{13}$ we see that 
$$
\vol(-K_X)\le (\frac{8(\mu(3,1)+\frac{1}{13})}{\frac{1}{13}})^4=(104\mu(3,1)+8)^4.
$$
We can now apply Lemma \ref{l-bnd-coeff-3d-canonical} to get an explicit bound. 

\end{proof}

%%%%%%%%%%%%%%%%%%%%%%%%%%%%%%%%%%%%%

\bigskip
\bigskip

Yau Mathematical Sciences Center, Jing Zhai, Tsinghua University, Hai Dian District, Beijing, China 100084; 

Email: birkar@tsinghua.edu.cn

\end{document}